\documentclass[12pt]{article}

\usepackage{amsthm,amssymb,amsmath,amsfonts}
\usepackage{amscd}
\usepackage{latexsym}

\usepackage{graphicx}
\usepackage{epstopdf}
\usepackage{float}
\usepackage{tikz}
\usetikzlibrary{arrows.meta,calc}

\usepackage{xcolor}

\usepackage{titlesec}
\usepackage{titletoc}

\usepackage[numbers,square]{natbib}

\usepackage[font=normalsize,labelsep=none]{caption}

\usepackage{enumitem}
\usepackage{enumerate}

\usepackage{setspace}
\usepackage{changepage}

\usepackage{mdframed}
\usepackage{framed}
\usepackage{fancybox}

\usepackage{diagbox}
\usepackage{arydshln}

\usepackage{bbm}

\usepackage{algorithm}
\usepackage{algorithmic}

\usepackage{empheq}

\usepackage{verbatim}

\usepackage[
colorlinks=true,
linkcolor=blue,
citecolor=blue,
urlcolor=blue,
pagebackref=true
]{hyperref}

\newcommand{\EXCL}{\mathsf{\scriptstyle EXCL}}
\newcommand{\NEXCL}{\mathsf{\scriptstyle NEXCL}}
\usepackage[
colorlinks=true,
linkcolor=blue,
citecolor=blue,
urlcolor=blue,
pagebackref=true
]{hyperref}

\makeatletter
\renewcommand*{\backref}[1]{}
\renewcommand*{\backrefalt}[4]{%
	\ifcase #1\relax
	\else
	\unskip~{\textup{#2}}%
	\fi
}

\makeatother

 \newtheorem{theorem}{Theorem}[section]
 
 \newtheorem{lemma}{Lemma}[section]

\usepackage{xcolor}
\usepackage{tikz}
\usetikzlibrary{arrows.meta,calc}

\usepackage{enumitem}
\setenumerate[1]{itemsep=0pt,partopsep=0pt,parsep=\parskip,topsep=2pt}
\setitemize[1]{itemsep=0pt,partopsep=0pt,parsep=\parskip,topsep=2pt}
\setdescription{itemsep=0pt,partopsep=0pt,parsep=\parskip,topsep=2pt}

\usepackage{mdframed}
\mdfsetup{
	linewidth=0.75pt,        
}

\newtheoremstyle{myremark}
{3pt}    
{3pt}    
{\normalfont} 
{}       
{\normalfont\bfseries} 
{.}      
{0.5em}  
{}       
\theoremstyle{myremark}
\newtheorem{remark}{Remark}[section]

\titleformat{\section} 
{\raggedright\large\bfseries\bf} 
{\thesection .\quad} 
{0pt} 
{}  
 
\titleformat{\subsection} 
{\raggedright\normalsize\bfseries\it} 
{\thesubsection .\quad} 
{0pt} 
{} 

\numberwithin{equation}{section}

\newlength{\boxedparwidth}
  {\begin{center} \begin{tabular}{|@{\hspace{.315in}}c@{\hspace{.15in}}|}
                  \hline \\ \begin{minipage}[t]{\boxedparwidth}
                  \setlength{\parindent}{.25in}}%
  {\end{minipage} \\ \\ \hline \end{tabular} \end{center}}

\begin{document}
\captionsetup[figure]{labelfont={bf},labelformat={default},labelsep=period,name={Fig.},font={footnotesize}}
\captionsetup[table]{labelfont={bf},labelformat={default},labelsep=period,name={Table},font={footnotesize}}

\title{The sorting-Denert statistic}
\author{Shao-Hua Liu\\
	School of Statistics and Data Science\\
	Guangdong University of Finance and Economics\\
	Guangzhou, China\\
	\texttt{liushaohua@gdufe.edu.cn}}
\date{}
\maketitle

 \vskip 0mm

\noindent {\bf Abstract.}
Denert's statistic is a classical Mahonian statistic on permutations.
Together with the excedance number,
it forms an Euler--Mahonian pair,
a result first conjectured by Denert and later proved by Foata and Zeilberger.
Motivated by this classical result,
we introduce the sorting-Denert statistic,
a sorting-index analogue of Denert's statistic.
It is obtained by replacing,
in Denert's statistic,
the inversion number of the non-excedance-letter subsequence
with its sorting index.
We prove that the sorting-Denert statistic,
together with the excedance number,
forms a new Euler--Mahonian pair.

   \vskip 2mm
\noindent {\bf Keywords}: 
Sorting-Denert statistic,
Denert's statistic,
sorting index,
Euler--Mahonian statistic,
excedance number

   \vskip 2mm
\noindent {\bf 2020 Mathematics Subject Classification}: 05A05, 05A19.
   \vskip 2mm

\titlecontents{section}[1.5em]
{\footnotesize \vspace{-10pt}}
{\contentslabel{1.5em}}{\hspace*{-1.6em}}
{~\titlerule*[0.6pc]{$.$}~\contentspage}

\titlecontents{subsection}[3.5em]
{\footnotesize \vspace{-10pt}}
{\contentslabel{2.3em}}{\hspace*{-4em}}
{~\titlerule*[0.6pc]{$.$}~\contentspage}

\section{Introduction}
Let $\mathfrak{S}_{n}$ denote the set of permutations of $[n]:=\{1,2,\ldots,n\}$.
Given a permutation $\sigma=\sigma_{1}\sigma_{2}\ldots\sigma_{n}\in\mathfrak{S}_{n}$,
a position $i$, $1\leqslant i\leqslant n-1$, is called a \emph{descent} of $\sigma$ if $\sigma_{i}>\sigma_{i+1}$.
Let $\textsf{Des}(\sigma)$ be the set  of descents of $\sigma$,
and let $\textsf{des}(\sigma)=|\textsf{Des}(\sigma)|$,
where $|\cdot|$ denotes cardinality.
A position $i$, $1\leqslant i\leqslant n$, is called 
an \emph{excedance} of $\sigma$,
or an \emph{excedance position}, 
if $\sigma_{i}>i$.
Let $\textsf{Exc}(\sigma)$ be the set of excedances of $\sigma$,
and let $\textsf{exc}(\sigma)=|\textsf{Exc}(\sigma)|$.
It is well known that $\textsf{des}$ and $\textsf{exc}$ are equidistributed over $\mathfrak{S}_{n}$,
and a permutation statistic equidistributed with them is said to be \emph{Eulerian}.

Let $\sigma=\sigma_{1}\sigma_{2}\ldots\sigma_{n}\in\mathfrak{S}_{n}$.
A pair $(i,j)$ of positions  is called an \emph{inversion} of $\sigma$ if $i<j$ and $\sigma_{i}>\sigma_{j}$. 
Let $\textsf{inv}(\sigma)$ be the number of inversions of $\sigma$.
Define the \emph{major index} of $\sigma$, denoted by $\textsf{maj}(\sigma)$,  by
\[\textsf{maj}(\sigma)=\sum_{i\in\textsf{Des}(\sigma)}i.\]
MacMahon \cite{MacMahon-1916} showed that \textsf{inv} and \textsf{maj} are equidistributed over $\mathfrak{S}_{n}$, and that
\begin{align*} 
	\sum_{\sigma\in\mathfrak{S}_{n}}q^{\textsf{inv}(\sigma)}=\sum_{\sigma\in\mathfrak{S}_{n}}q^{\textsf{maj}(\sigma)}=[n]_{q}!,
\end{align*}
where  
$[n]_{q}!=[n]_{q}[n-1]_{q}\ldots[1]_{q}$
with 
$[n]_{q}=1+q+\cdots+q^{n-1}.$
In his honor, any permutation statistic with this distribution over $\mathfrak{S}_{n}$ is said to be \emph{Mahonian}.
We recall two Mahonian statistics:
the sorting index and Denert's statistic.

The sorting index was introduced by Petersen \cite{Petersen-2011} for permutations and was later naturally extended to words by Grady and Poznanovi\'{c} \cite{Grady-2018}.
The sorting index can be described as the total distance that the elements of $\sigma$ travel when $\sigma$ is sorted using the Straight Selection Sort
algorithm \cite{Knuth-1998}. 
More precisely, let
$\sigma=\sigma_1\sigma_2\ldots\sigma_m$ be a permutation of a finite set
$A=\{a_1<a_2<\cdots<a_m\}$ of positive integers. 
We sort $\sigma$ into $a_1a_2\ldots a_m$ as follows: 
using a transposition, we first move the largest element $a_m$ to its proper position, 
then the second largest element $a_{m-1}$ to its proper position, and so on. Whenever $a_r$ is moved from position $i$ to its proper position $r$, this step contributes $r-i$. 
The \emph{sorting index} of $\sigma$, denoted by  $\textsf{sor}(\sigma)$, is the sum of these contributions.
For example, the steps for sorting $\sigma=142683$ are
\[
1\,4\,2\,6\,\underline{8}\,\underline{3}
\xrightarrow{(56)}
1\,4\,2\,\underline{6}\,\underline{3}\,8
\xrightarrow{(45)}
1\,\underline{4}\,2\,\underline{3}\,6\,8
\xrightarrow{(24)}
1\,\underline{3}\,\underline{2}\,4\,6\,8
\xrightarrow{(23)}
1\,2\,3\,4\,6\,8
\]
and thus
$\textsf{sor}(142683)=(6-5)+(5-4)+(4-2)+(3-2)=5.$

We now recall Denert's statistic,
a classical Mahonian statistic introduced by Denert \cite{Denert-1990}.
Here we use an equivalent definition due to Foata and Zeilberger \cite{Foata-1990}.
Let $\sigma=\sigma_{1}\sigma_{2}\ldots \sigma_{n}\in\mathfrak{S}_{n}$.
If $i$ is an excedance,
we call $\sigma_{i}$ an \emph{excedance letter}.
Let $\EXCL(\sigma)$
be the subsequence of $\sigma$ consisting of the excedance letters,
and let $\NEXCL(\sigma)$ be the subsequence of $\sigma$ consisting of the remaining letters (i.e., the non-excedance letters).
Define \emph{Denert's statistic} of $\sigma$, denoted by $\textsf{den}(\sigma)$, by
\[~\textsf{den}(\sigma)=\sum_{i\in\textsf{Exc}(\sigma)}i+\textsf{inv}(\EXCL(\sigma))+
\textsf{inv}(\NEXCL(\sigma)).\]

Motivated by the above definition of Denert's statistic, 
we define a new Mahonian statistic by replacing the inversion number of the non-excedance-letter subsequence with its sorting index. 
More precisely, define the \emph{sorting-Denert statistic}
of $\sigma$, denoted by $\textsf{sden}(\sigma)$, by
\[\textsf{sden}(\sigma)=\sum_{i\in\textsf{Exc}(\sigma)}i+\textsf{inv}(\EXCL(\sigma))+
\textsf{sor}(\NEXCL(\sigma)).\]

For example, if \(\sigma=715492683\), then
\[
\textsf{Exc}(\sigma)=\{1,3,5\},\qquad
\EXCL(\sigma)=759,\qquad
\NEXCL(\sigma)=142683,
\]
and hence
\[
~\textsf{den}(\sigma)
=
1+3+5+\textsf{inv}(759)+\textsf{inv}(142683)
=
14,
\]
while
\[
\textsf{sden}(\sigma)
=
1+3+5+\textsf{inv}(759)+\textsf{sor}(142683)
=
15.
\]

A pair of permutation statistics is said to be \emph{Euler--Mahonian} if it is equidistributed with $(\textsf{des},\textsf{maj})$.
Denert \cite{Denert-1990} conjectured that the pair $(\textsf{exc},\textsf{den})$ is Euler--Mahonian.
The first proof of Denert's conjecture was given by Foata and Zeilberger \cite{Foata-1990},
and a direct bijective proof was later provided by Han \cite{Han-1990-direct}.
Recently, Liu \cite{Liu-2025} and Huang--Wen--Yan \cite{Yan-2026} gave two new bijective proofs.
The Huang--Wen--Yan bijection applies not only to permutations but also to words.

The following theorem, which gives a new Euler--Mahonian pair, is the main result of this paper.
\begin{theorem}\label{Thm-exc-sden-is-Euler-Mahonian}
The pair $(\emph{\textsf{exc}},\emph{\textsf{sden}})$ is Euler--Mahonian.
\end{theorem}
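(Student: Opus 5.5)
The plan is to reduce the statement to Denert's conjecture, i.e.\ to the Foata--Zeilberger theorem that $(\textsf{exc},\textsf{den})$ is Euler--Mahonian. To do this I will build a bijection $\Phi:\mathfrak{S}_n\to\mathfrak{S}_n$ that preserves the excedance set, the excedance letters in their positions, and the set of non-excedance letters, and that only rearranges the non-excedance letters among the non-excedance positions. Under $\Phi$ the terms $\sum_{i\in\textsf{Exc}}i$ and $\textsf{inv}(\EXCL)$ stay the same. The construction should turn $\textsf{sor}(\NEXCL(\sigma))$ into $\textsf{inv}(\NEXCL(\Phi(\sigma)))$. This gives $(\textsf{exc},\textsf{sden})\sim(\textsf{exc},\textsf{den})$, and the theorem follows from \cite{Foata-1990}.

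\textbf{Step 1.} Fix the excedance positions $E$, the excedance letters placed on them, and the non-excedance positions $p_1<\cdots<p_k$ with the set $B=\{b_1<\cdots<b_k\}$ of letters they receive. Each word $\tau$ on $B$ placed on the $p_j$ gives a permutation. We need every $p_j$ to stay a non-excedance, so the constraint is $\tau_j\leqslant p_j$ for all $j$. Call the set of valid words $T$.

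\textbf{Step 2.} I will show that $\textsf{inv}$ and $\textsf{sor}$ are equidistributed on $T$. Both can be written with a Lehmer-type code. For $\textsf{inv}$, record for each $b_r$ the number of smaller letters to its right. For $\textsf{sor}$, selection sort places $b_k,b_{k-1},\ldots$ in turn, and the step for $b_r$ contributes $r-i$ with $0\leqslant r-i\leqslant r-1$. Both codes range over $\prod_r\{0,\ldots,r-1\}$ when there is no constraint. The constraint set $T$ has a staircase form, since $b_r$ may only occupy positions $j$ with $p_j\geqslant b_r$, and this set of allowed $j$ is an up-set (a final segment) of $\{1,\ldots,k\}$. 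I will process letters from largest to smallest, since largest letters are the most constrained.

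\textbf{Step 3.} For $\textsf{inv}$, insert letters from largest to smallest into the remaining free slots. When $b_r$ is inserted, the number of smaller letters to its right is determined by how many of the remaining free slots lie to its right. For $\textsf{sor}$, selection sort fixes $b_k$ first by a transposition, then recurses on the smaller word, which is again a staircase-constrained instance. If we can check that the number of admissible choices for $b_r$ is the same in both procedures, and that they contribute the same range of values $0,1,\ldots,c_r-1$, then both generating functions factor as $\prod_r[c_r]_q$, and a code-matching bijection follows.

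The main obstacle is Step 3 on the $\textsf{sor}$ side. A transposition moves the displaced small letter to a new position, and that position must still satisfy the constraint $\tau_j\leqslant p_j$. This works because the displaced letter moves to the right, to position $r$'s slot, which has a larger bound $p$. The remaining part of the argument is to check that the number of admissible positions for $b_r$ equals the $\textsf{inv}$-side count $c_r=\#\{j: p_j\geqslant b_r\}-(k-r)$.
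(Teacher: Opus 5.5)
Your reduction to Foata--Zeilberger is a genuinely different route from the paper's, and it does go through. However, the justification you give at the step you call the main obstacle is backwards. In Straight Selection Sort, if $b_r$ sits at slot $i<r$, it is $b_r$ that moves right to slot $r$. The displaced letter moves \emph{left}, from slot $r$ to slot $i$, whose bound $p_i<p_r$ is smaller, so ``larger bound'' is not why the constraint survives.

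The correct reason is that the admissible sets are nested. The displaced letter is some $b_s$ with $s<r$, since $b_{r+1},\ldots,b_k$ already occupy slots $r+1,\ldots,k$. Because $b_r$ was admissible at slot $i$, we get $p_i\geqslant b_r>b_s$.

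For the product formula you also need the converse direction. Let $T_r$ be the set of admissible words on slots $1,\ldots,r$ with letters $b_1,\ldots,b_r$. The map sending $\tau\in T_r$ to the pair ($r-i$, the word left on slots $1,\ldots,r-1$ after the transposition) must be a bijection $T_r\to\{0,\ldots,c_r-1\}\times T_{r-1}$. Its inverse puts $b_r$ at slot $i$, which is admissible since $i\geqslant r-c_r+1$. It also moves the letter previously at slot $i$ to slot $r$, which is admissible for every remaining letter because $p_r\geqslant b_r$.

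That inequality, equivalently $\#\{j:p_j\geqslant b_r\}\geqslant k-r+1$, holds because $T\neq\emptyset$. It is also exactly what makes the number of admissible slots for $b_r$ inside $\{1,\ldots,r\}$ equal to your $c_r$. With these two points, both generating functions equal $\prod_r[c_r]_q$ and the reduction is complete.

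Compared with the paper: the paper never invokes Denert's conjecture. It builds an explicit insertion bijection $\phi_n^{\mathrm{sden}}$ satisfying the Rawlings--Han insertion criterion. That construction needs three ingredients:
\begin{itemize}
\item the cycle-structure description of $\tau_{(e)}$;
\item Dumont's identity, to show that the change $c-x$ in $\textsf{inv}(\EXCL)$ is compensated by the gain $x$ in $\textsf{sor}(\NEXCL)$;
\item the critical-letter analysis, for invertibility.
\end{itemize}
Your argument is much shorter and yields a finer statement. On each fiber with fixed excedance positions and fixed excedance letters, $\textsf{sden}$ and $\textsf{den}$ are equidistributed; in particular $(\textsf{Exc},\textsf{sden})$ and $(\textsf{Exc},\textsf{den})$ are equidistributed. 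It is not self-contained, though: the Euler--Mahonian property is imported from Foata--Zeilberger (or Han's bijection). The paper's proof establishes it directly, independently of Denert's conjecture.
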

\begin{remark}
	In the definition of $\textsf{sden}$,
	the replacement is made only on the non-excedance letters.
	This choice is essential in the sense that the corresponding replacement
	on the excedance letters does not preserve the Mahonian distribution,
	regardless of whether the replacement on the non-excedance letters is also made.
\end{remark}
The rest of this paper is organized as follows.
In Section \ref{section-Construction of the insertion map}, 
we construct the insertion map \(\phi_n^{\mathrm{sden}}\).
In Section \ref{section-proof-of-bijection-exc-sden}, 
we prove that this map is a bijection and satisfies the required property,
thereby proving the main theorem.

\section{Construction of the insertion map}\label{section-Construction of the insertion map}
Motivated by the Huang--Wen--Yan bijection \cite{Yan-2026},  
we present a bijective proof of Theorem \ref{Thm-exc-sden-is-Euler-Mahonian}.
To prove that a pair of permutation statistics \((\textsf{st}_{1},\textsf{st}_{2})\) is Euler--Mahonian,
it suffices to show that there exists a bijection
\vspace{-6pt}
 \[\phi_{n}:\mathfrak{S}_{n-1}\times\{0,1,\ldots,n-1\}\rightarrow\mathfrak{S}_{n}\]
 
\vspace{-10pt}
\noindent such that for $\sigma\in\mathfrak{S}_{n-1}$ and $0\leqslant c\leqslant n-1$, 
we have
\begin{equation}\label{eq-Euler--Mahonian}
	\begin{split}
		\textsf{st}_{1}(\phi_{n}(\sigma,c))&=\left
		\{
		\begin{aligned}
			&\textsf{st}_{1}(\sigma), \quad\quad\quad\text{if~}0\leqslant c\leqslant \textsf{st}_{1}(\sigma),\\
			&\textsf{st}_{1}(\sigma)+1,\quad\text{~otherwise,}
		\end{aligned}
		\right.\\				
		\textsf{st}_{2}(\phi_{n}(\sigma,c))&=\textsf{st}_{2}(\sigma)+c, 
	\end{split}
\end{equation}
see \cite{Rawlings-1981,Han-1990-direct}. 
The main result of this section is the following theorem.
\begin{theorem}\label{Thm-bijection-exc-sden}
	There exists a bijection 
	\[
	\phi_{n}^{\emph{\text{sden}}}: \mathfrak{S}_{n-1}\times\{0,1,\ldots,n-1\}\rightarrow\mathfrak{S}_{n},
	\]
	such that for $\sigma\in\mathfrak{S}_{n-1}$ and $0\leqslant c\leqslant n-1$, 
	we have 
\begin{equation}\label{eq-(exc,sden)-Exc-sden}
	\begin{split}
	\emph{\textsf{Exc}}(\phi_{n}^{\emph{\text{sden}}}(\sigma,c))&=
	\left\{
	\begin{aligned}
		&\emph{\textsf{Exc}}(\sigma), \quad\quad\quad\quad~\text{if~}0\leqslant c\leqslant \emph{\textsf{exc}}(\sigma),\\
		&\emph{\textsf{Exc}}(\sigma)\cup\{k_{d}\},\quad\text{~otherwise}
	\end{aligned}
	\right.\\
	\emph{\textsf{sden}}(\phi_{n}^{\emph{\text{sden}}}(\sigma,c))&=\emph{\textsf{sden}}(\sigma)+c,
		\end{split}
\end{equation}
where, in the second case of the first formula, $d=c-\emph{\textsf{exc}}(\sigma)$ and \(k_d\) is the \(d\)-th smallest non-excedance position of \(\sigma\).
\end{theorem}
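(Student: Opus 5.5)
The plan is to build $\phi_n^{\text{sden}}(\sigma,c)$ by inserting the letter $n$ into $\sigma\in\mathfrak{S}_{n-1}$ (viewed as a word of length $n-1$, with position $n$ appended). The insertion is arranged so that the three summands of $\textsf{sden}$ change in a controlled way. The basic tool is a recursive description of $\textsf{sor}$. If $w$ is a word on $m$ distinct letters whose largest letter sits at position $i$, then $\textsf{sor}(w)=(m-i)+\textsf{sor}(w')$, where $w'$ is obtained from $w$ by swapping that largest letter with the last letter and deleting it.

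Conversely, let $u$ have length $m$. Append a new maximal letter to $u$ and then transpose it with the letter in position $j$, $1\leqslant j\leqslant m+1$. This increases $\textsf{sor}$ by exactly $m+1-j$, so every increment in $\{0,1,\ldots,m\}$ is realised exactly once. This is the sorting-index counterpart of the usual observation that inserting a maximal letter into a word raises $\textsf{inv}$ by any prescribed amount. It is also the reason the replacement is made on $\NEXCL$: in the insertion below, $n$ always interacts with the non-excedance word through such a transposition.

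\emph{Case $c>\textsf{exc}(\sigma)$.} Put $d=c-\textsf{exc}(\sigma)$, so $1\leqslant d\leqslant n-1-\textsf{exc}(\sigma)$, and let $k_d$ be the $d$-th smallest non-excedance position of $\sigma$. Define $\phi_n^{\text{sden}}(\sigma,c)$ by putting $n$ at position $k_d$ and moving $\sigma_{k_d}$ to position $n$. Then $k_d$ becomes an excedance, every other position keeps its status, and position $n$ is a non-excedance, so $\textsf{Exc}$ gains exactly $k_d$. I then compute the three contributions.
\begin{itemize}
\item $\sum_{i\in\textsf{Exc}}i$ increases by $k_d$.
\item $\textsf{inv}(\EXCL)$ increases by the number of excedances of $\sigma$ to the right of $k_d$, since $n$ is inserted into $\EXCL$ at the slot corresponding to $k_d$.
\item $\NEXCL$ changes by the operation of the previous paragraph with $j=d$, so $\textsf{sor}$ increases by $m+1-d$, where $m=n-1-\textsf{exc}(\sigma)$.
\end{itemize}
The key check is that these three terms add up to $c$. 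Writing $k_d=d+\#\{\text{excedances of }\sigma\text{ left of }k_d\}$, the first two terms give $d+\textsf{exc}(\sigma)$. So the sum is $\textsf{exc}(\sigma)+m+1$ rather than $c$, and I expect to have to adjust: either transpose $n$ with the $(m+1-d)$-th non-excedance letter instead, or index $d$ from the other end. I would fix the convention so that the total is exactly $c$ while the new excedance is still $k_d$. If the arithmetic forces it, $k_d$ in the statement can be matched by re-indexing which non-excedance letter is swapped; the $\textsf{sor}$-increment is then $m+1-j$ for the appropriate $j$.

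\emph{Case $0\leqslant c\leqslant\textsf{exc}(\sigma)$.} For $c=0$, place $n$ at position $n$; nothing changes. For $1\leqslant c\leqslant\textsf{exc}(\sigma)$, $\textsf{Exc}$ must be preserved. Since $n$ can only be a non-excedance letter at position $n$, $n$ must occupy some excedance position $e$, and some letter must be pushed to position $n$. I would let $e_1<\cdots<e_k$ be the excedance positions of $\sigma$ and, for the chosen index $j$, insert $n$ into $\EXCL$ so that $\textsf{inv}(\EXCL)$ grows by a prescribed amount. To keep $\textsf{Exc}$ unchanged, the excedance letters are shifted cyclically along $e_j,e_{j+1},\ldots$, in the spirit of the Huang--Wen--Yan bijection, and the displaced letter is sent to the end of $\NEXCL$. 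I would then verify that the net change is exactly $c$.

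\emph{Main obstacle.} The hard step is this second case. Appending a non-maximal letter $a$ to $\NEXCL$ changes $\textsf{sor}$ in a way that depends on the relative order of $a$ and the letters of $\NEXCL$, and this change must be compensated exactly by the change in $\textsf{inv}(\EXCL)$. I would first try to choose the displaced letter as the smallest letter involved, or to choose it so that it is smaller than every letter of $\NEXCL$ exceeding it in position. In that situation the recursive formula shows that $\textsf{sor}$ changes by a count of the larger letters, and that count matches the inversion loss in $\EXCL$.

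\emph{Bijectivity.} Finally I would prove bijectivity by describing the inverse. Locate $n$ in $\tau\in\mathfrak{S}_n$. If $n$ is at position $n$, then $c=0$. If $n$ sits at a position that is an excedance of $\tau$ but would not be one after removing $n$, we are in the first case; read off $d$ from that position. Otherwise we are in the second case; undo the cyclic shift. The map is injective and the two sides both have $n!$ elements, so $\phi_n^{\text{sden}}$ is a bijection; with \eqref{eq-Euler--Mahonian} this gives Theorem~\ref{Thm-exc-sden-is-Euler-Mahonian}.
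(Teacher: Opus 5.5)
Your proposal has genuine gaps in both nontrivial cases. As written, neither case gives a map satisfying the statement.

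\emph{The case $c>\textsf{exc}(\sigma)$.} Your computation of the $\NEXCL$ term is wrong.

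\begin{itemize}
\item After the swap, $n$ sits at the excedance position $k_d$, so it belongs to $\EXCL$, not $\NEXCL$. In fact $n$ is a non-excedance letter only at position $n$, where it contributes nothing to $\textsf{sor}$. So your append-and-transpose observation never comes into play.
\item The new non-excedance word is $\tau=\NEXCL(\sigma)$ with its $d$-th letter moved to the end. This changes $\textsf{sor}$ uncontrollably: for $\sigma=12$, $c=1$, your map gives $321$, with $\textsf{sden}=2\neq 1$.
\item Your own arithmetic already shows what is needed. $k_d$ plus the number of excedances to the right of $k_d$ equals $d+\textsf{exc}(\sigma)=c$, so the $\textsf{sor}$ term must not change at all.
\item Re-indexing which letter is swapped cannot achieve this. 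It would also move the new excedance away from $k_d$, which the statement prescribes.
\end{itemize}

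What works, and what the paper does, is to put $n$ at $k_d$ and shift the non-excedance letters from $k_d$ onward one non-excedance slot to the right. Each shifted letter lands at a larger position, so it stays a non-excedance, and $\NEXCL$ is literally unchanged.

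\emph{The case $1\leqslant c\leqslant\textsf{exc}(\sigma)$.} This is the heart of the theorem, and you leave exactly the crux open.

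First, shifting excedance letters along $e_j,e_{j+1},\ldots$ does not preserve $\textsf{Exc}$ in general, since moving $\sigma_{e_i}$ to $e_{i+1}$ needs $\sigma_{e_i}>e_{i+1}$. One must stop at the first letter $e$ that would fail (or at the last one) and extract it. With that rule, $n$ contributes $c-1$ new inversions to $\EXCL$. Removing $e$ destroys exactly $x-1$ of them, where $x=|\{i:\sigma_i<e\leqslant i\}|$ by a Dumont-type count. So $\textsf{sor}(\NEXCL)$ must rise by exactly $x$.

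Second, appending $e$ at the end of $\NEXCL$ does not achieve this. For $\sigma=21$, $c=1$ it gives $312$ with $\textsf{sden}=1$, whereas the only permutation with the required $\textsf{Exc}$ and $\textsf{sden}$ is $321$. You also cannot choose the displaced letter to be ``smallest'': it is forced by the preservation of $\textsf{Exc}$.

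The missing idea is the paper's. Insert $e$ at the first non-excedance position weakly right of position $e$. Then cyclically permute $f_1,\ldots,f_x$, where $f_i=f_\tau(b_i,e)$ is the first return below $e$ under $\tau$. Since $\tau_{(e)}=f_\tau(a_1,e)\cdots f_\tau(a_k,e)=P\,f_1\cdots f_x$, the modified word partially sorts to $P\,e\,f_2\cdots f_xf_1$. Sorting $e$ then costs exactly $x$, and one is back at $\tau_{(e)}$.

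\emph{The inverse.} Your inverse is too vague to check. The paper needs a genuine invariant to separate the cases and to recover $e$: the rightmost non-excedance letter $a=w_t$ with $\{a,a+1,\ldots,t-1\}\subseteq\textsf{Exc}(w)$. Writing $z$ for the position of $n$, one has $a=e$ and $z<a$ in Case 2, and $a\leqslant z<n$ in Case 3.
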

The first formula in \eqref{eq-(exc,sden)-Exc-sden} is a strengthening of the first formula in \eqref{eq-Euler--Mahonian}.
Thus, the bijection $\phi_{n}^{\text{sden}}$ in Theorem \ref{Thm-bijection-exc-sden} yields a bijective proof of Theorem \ref{Thm-exc-sden-is-Euler-Mahonian}.
Before introducing the bijection, 
we need the following auxiliary notation.

Let \(\tau=\tau_{1}\tau_{2}\ldots\tau_{m}\) be a permutation of a finite set
\(A=\{a_1<a_2<\cdots<a_m\}\) of positive integers.
We view \(\tau\) as the bijection on \(A\) defined by \(\tau(a_i)=\tau_i\)
for \(1\leqslant i\leqslant m\).
For an integer \(e\) and an element \(a\in A\) with \(a\leqslant e\),
define
\[
f_{\tau}(a,e)=\tau^{j}(a),
\]
where \(j\) is the smallest positive integer satisfying
\[
\tau^{j}(a)\leqslant e.
\]
This is well-defined.
Indeed,
some positive iterate of \(a\) under the permutation \(\tau\) returns to \(a\),
which is at most \(e\).

For example,
when \(\tau=1\,\,2\,\,5\,\,11\,\,6\,\,12\,\,4\),
we write it in two-line notation as
\[
\tau=
\left(
\begin{matrix}
	1 & 2 & 4 & 5 & 6 & 11 & 12 \\
	1 & 2 & 5 & 11 & 6 & 12 & 4
\end{matrix}
\right).
\]
Let $e=7$. Then
\[
4 \xrightarrow{\tau} 5 \leqslant 7,
\qquad
5 \xrightarrow{\tau} 11 \xrightarrow{\tau} 12 \xrightarrow{\tau} 4 \leqslant 7,
\qquad
6 \xrightarrow{\tau} 6 \leqslant 7.
\]
Thus,
\begin{align}\label{example-b1b2b3}
	f_{\tau}(4,7)=5,
	\qquad
	f_{\tau}(5,7)=4,
	\qquad
	f_{\tau}(6,7)=6.
\end{align}

We next introduce the \textsf{sden}-labeling of the spaces of a permutation
$\sigma\in\mathfrak{S}_{n-1}$.
Let $\textsf{exc}(\sigma)=s$. 
We label the spaces of $\sigma$ as follows. First, label the space after the last letter of $\sigma$ with $0$. 
Next, label the spaces immediately preceding the excedance letters with
\[
1,2,\ldots,s
\]
from right to left. 
Finally, label the spaces immediately preceding the non-excedance letters with
\[
s+1,s+2,\ldots,n-1
\]
from left to right.

For example, the \textsf{sden}-labeling of 
$\sigma=3\,\,10\,\,1\,\,14\,\,7\,\,2\,\, 8\,\,9\,\,5\,\,13\,\,11\,\,6\,\,12\,\,4$ is as follows:
\begin{align}\label{example-labeling}
\sigma={}_{7}\textcolor{red}{3}\,
{}_{6}\textcolor{red}{10}\,
{}_{8}\textcolor{blue}{1}\,
{}_{5}\textcolor{red}{14}\,
{}_{4}\textcolor{red}{7}\,
{}_{9}\textcolor{blue}{2}\,
{}_{3}\textcolor{red}{8}\,
{}_{2}\textcolor{red}{9}\,
{}_{10}\textcolor{blue}{5}\,
{}_{1}\textcolor{red}{13}\,
{}_{11}\textcolor{blue}{11}\,
{}_{12}\textcolor{blue}{6}\,
{}_{13}\textcolor{blue}{12}\,
{}_{14}\textcolor{blue}{4}_{0}.
\end{align}
In this example, we use red letters to indicate excedance letters and blue letters to indicate non-excedance letters.
The spaces immediately preceding the red letters are labeled with $1,\ldots,7$ from right to left, 
while the spaces immediately preceding the blue letters are labeled with $8,\ldots,14$ from left to right. 
Finally, the space after the last letter is labeled with $0$.

We are now ready to define the map
\[
\phi_{n}^{\mathrm{sden}}:
\mathfrak{S}_{n-1}\times\{0,1,\ldots,n-1\}\rightarrow\mathfrak{S}_{n}.
\]
Let $\sigma=\sigma_{1}\sigma_{2}\ldots\sigma_{n-1}\in\mathfrak{S}_{n-1}$ and let $c$ be an integer with $0\leqslant c\leqslant n-1$.
We distinguish three cases by considering the space labeled with $c$.

\medskip
\noindent\textbf{Case 1: The space labeled with $c$ is the last space.}
Set
\[
\phi_{n}^{\mathrm{sden}}(\sigma,c)
=
\sigma_1\sigma_2\ldots\sigma_{n-1}n.
\]

\medskip
\noindent\textbf{Case 2: The space labeled with $c$ is immediately before an excedance letter.}
We obtain
\(\phi_{n}^{\mathrm{sden}}(\sigma,c)\) through the following steps.

\smallskip
\noindent\textit{Step 1: Shift the excedance letters, extract \(e\), and insert \(n\).}

Starting from the excedance letter immediately after the space labeled with \(c\),
shift the excedance letters one excedance position to the right, 
stopping either at the first excedance letter that would become a non-excedance letter after the shift, 
or at the last excedance letter if no such excedance letter exists. 
Denote this stopping excedance letter by \(e\),
and extract it from the sequence.
Fill the gap vacated by the first shifted letter with \(n\).
Let $\sigma^{\prime}$ be the resulting sequence.

For example, consider the permutation $\sigma$ in (\ref{example-labeling}).
For $c=6$, we have  
\begin{center}
	\includegraphics[width=0.70\textwidth]{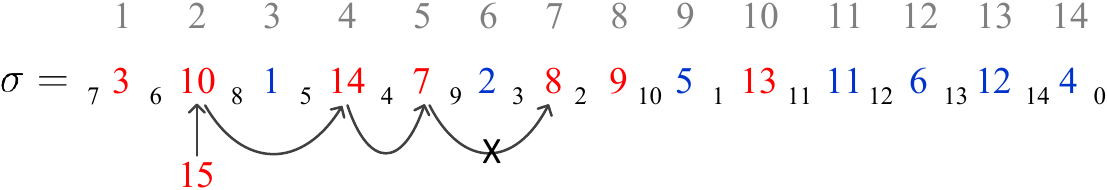}
\end{center}
In this example, the gray numbers indicate the positions of the corresponding letters.
Thus,  $e=7$, and the resulting sequence is 
\begin{align*}
	\sigma^{\prime}=
	\textcolor{red}{3}\,\,\,
	\textcolor{red}{15}\,\,\,
	\textcolor{blue}{1}\,\,\,
	\textcolor{red}{10}\,\,\,
	\textcolor{red}{14}\,\,\,
	\textcolor{blue}{2}\,\,\,
	\textcolor{red}{8}\,\,\,
	\textcolor{red}{9}\,\,\,
	\textcolor{blue}{5}\,\,\,
	\textcolor{red}{13}\,\,\,
	\textcolor{blue}{11}\,\,\,
	\textcolor{blue}{6}\,\,\,
	\textcolor{blue}{12}\,\,\,
	\textcolor{blue}{4}.
\end{align*}

\smallskip
\noindent\textit{Step 2. Shift the non-excedance letters and insert \(e\).}

Extend \(\sigma'\) by appending an empty position at the right end.
Then shift the non-excedance letters
lying weakly to the right of position \(e\) in \(\sigma'\),
or equivalently,
weakly to the right of the letter \(\sigma'_e\),
one non-excedance position to the right.
Fill the gap vacated by the first shifted letter with \(e\).
Let \(\sigma^{\prime\prime}\) be the resulting permutation.

In our running example, we have
\begin{center}
	\includegraphics[width=0.74\textwidth]{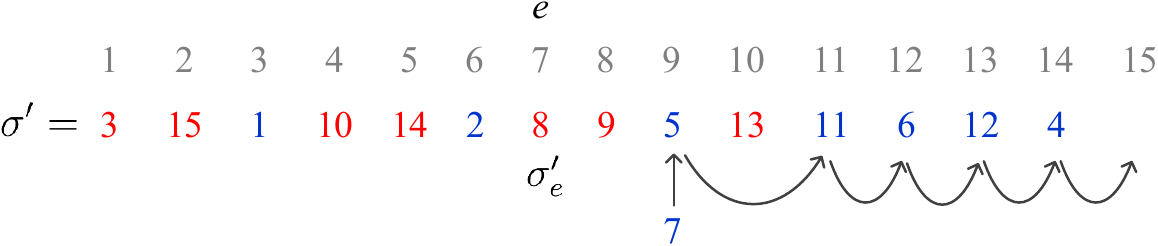}
\end{center}
The resulting permutation is 
\begin{align}\label{example-sigma-prime-prime}
	\sigma^{\prime\prime}=
	\textcolor{red}{3}\,\,\,
	\textcolor{red}{15}\,\,\,
	\textcolor{blue}{1}\,\,\,
	\textcolor{red}{10}\,\,\,
	\textcolor{red}{14}\,\,\,
	\textcolor{blue}{2}\,\,\,
	\textcolor{red}{8}\,\,\,
	\textcolor{red}{9}\,\,\,
	\textcolor{blue}{7}\,\,\,
	\textcolor{red}{13}\,\,\,
	\textcolor{blue}{5}\,\,\,	
	\textcolor{blue}{11}\,\,\,
	\textcolor{blue}{6}\,\,\,
	\textcolor{blue}{12}\,\,\,
	\textcolor{blue}{4}.
\end{align}

\smallskip
\noindent\textit{Step 3.  Adjust non-excedance letters.}

Consider the original permutation \(\sigma\).
Let \(x\) be the number of non-excedance letters of \(\sigma\)
that lie weakly to the right of position \(e\) in \(\sigma\),
or equivalently,
weakly to the right of the letter \(\sigma_e\),
and are less than \(e\).
As will be noted in the proof of Lemma \ref{lemma:Exc-sden},
we have \(x\geqslant1\).
Let \(\tau\) be the subsequence of \(\sigma\) consisting of the non-excedance letters.
That is,
\[
\tau=\NEXCL(\sigma).
\]
List the letters of \(\tau\) that are smaller than \(e\) in increasing order,
and let \(b_1<b_2<\cdots<b_x\) be the last \(x\) entries in this list.
For \(1\leqslant i\leqslant x\),
set
\[
f_i=f_\tau(b_i,e).
\]
Obtain \(\phi_n^{\text{sden}}(\sigma,c)\) from \(\sigma^{\prime\prime}\)
by cyclically replacing the letters \(f_1,f_2,\ldots,f_x\) as follows:
\[
f_1\mapsto f_2\mapsto \cdots \mapsto f_x\mapsto f_1.
\]

In our example,
we have
\[
\tau=
\textcolor{blue}{1}\,\,
\textcolor{blue}{2}\,\,
\textcolor{blue}{5}\,\,
\textcolor{blue}{11}\,\,
\textcolor{blue}{6}\,\,
\textcolor{blue}{12}\,\,
\textcolor{blue}{4},
\qquad
e=7.
\]
In \(\sigma\), the non-excedance letters lying weakly to the right of position \(7\) are
\(
5,11,6,12,4.
\)
Among them,
the letters smaller than \(7\) are
\(
5,6,4.
\)
Thus \(x=3\).
The letters of \(\tau\) that are smaller than \(7\),
listed in increasing order,
are
\(
1,2,4,5,6.
\)
Hence the last \(x=3\) entries in this list are
\[
b_1=4,
\qquad
b_2=5,
\qquad
b_3=6.
\]
By \eqref{example-b1b2b3},
we have
\[
f_1=5,
\qquad
f_2=4,
\qquad
f_3=6.
\]
The permutation \(\phi_{15}^{\text{sden}}(\sigma,6)\) is obtained from
\(\sigma^{\prime\prime}\) in \eqref{example-sigma-prime-prime}
by performing the cyclic replacement
\[
5\mapsto 4\mapsto 6\mapsto 5.
\]
Hence
\[
\phi_{15}^{\text{sden}}(\sigma,6)
=
\textcolor{red}{3}\,\,\,
\textcolor{red}{15}\,\,\,
\textcolor{blue}{1}\,\,\,
\textcolor{red}{10}\,\,\,
\textcolor{red}{14}\,\,\,
\textcolor{blue}{2}\,\,\,
\textcolor{red}{8}\,\,\,
\textcolor{red}{9}\,\,\,
\textcolor{blue}{7}\,\,\,
\textcolor{red}{13}\,\,\,
\textcolor{blue}{4}\,\,\,
\textcolor{blue}{11}\,\,\,
\textcolor{blue}{5}\,\,\,
\textcolor{blue}{12}\,\,\,
\textcolor{blue}{6}.
\]

\medskip
\noindent\textbf{Case 3: The space labeled with $c$ is immediately before a non-excedance letter.}
Extend \(\sigma\) by appending an empty position at the right end.
Starting from the non-excedance letter immediately after the space labeled with \(c\),
shift the non-excedance letters one non-excedance position to the right.
Fill the gap vacated by the first shifted letter with $n$.
Let $\phi_{n}^{\text{sden}}(\sigma,c)$ be the resulting permutation.

For example, consider the permutation $\sigma$ in (\ref{example-labeling}). 
For \(c=9\), we have
\begin{center}
	\includegraphics[width=0.71\textwidth]{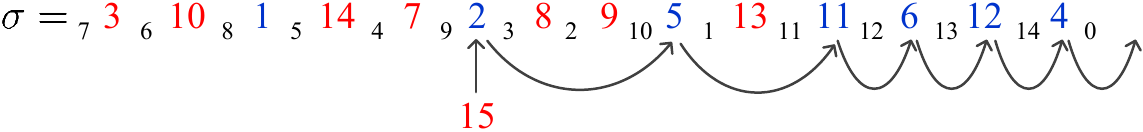}
\end{center}
The resulting permutation is
\begin{align*}
	\phi_{15}^{\text{sden}}(\sigma,9)=
	\textcolor{red}{3}\,\,\,
	\textcolor{red}{10}\,\,\,
	\textcolor{blue}{1}\,\,\,
	\textcolor{red}{14}\,\,\,
	\textcolor{red}{7}\,\,\,
	\textcolor{red}{15}\,\,\,
	\textcolor{red}{8}\,\,\,
	\textcolor{red}{9}\,\,\,
	\textcolor{blue}{2}\,\,\,
	\textcolor{red}{13}\,\,\,
	\textcolor{blue}{5}\,\,\,
	\textcolor{blue}{11}\,\,\,
	\textcolor{blue}{6}\,\,\,
	\textcolor{blue}{12}\,\,\,
	\textcolor{blue}{4}.
\end{align*}

\section{Proof of Theorem \ref{Thm-bijection-exc-sden}}\label{section-proof-of-bijection-exc-sden}
In this section,
we prove that the map \(\phi_n^{\text{sden}}\) constructed in the previous section
is a bijection and satisfies \eqref{eq-(exc,sden)-Exc-sden}.
This proves Theorem \ref{Thm-bijection-exc-sden},
and hence yields Theorem \ref{Thm-exc-sden-is-Euler-Mahonian}.
Throughout this section,
we use the notation introduced in the previous section.
We first prove \eqref{eq-(exc,sden)-Exc-sden},
and then show that \(\phi_n^{\text{sden}}\) is a bijection.
Before doing so,
we need several lemmas.

Let \(\tau\) be a permutation of a finite set of positive integers,
and let \(e\) be an integer.
Define \(\tau_{(e)}\) as follows:
perform the Straight Selection Sort algorithm only on the letters greater than \(e\),
moving them to their proper positions,
and then delete these letters from the resulting sequence.

For example, let
\(
\tau=1\,\,2\,\,5\,\,11\,\,6\,\,12\,\,4
\)
and let \(e=7\).
The letters greater than \(7\) are \(11\) and \(12\).
Thus
\[
\tau=
1\,\,2\,\,5\,\,11\,\,6\,\,\underline{12}\,\,\underline{4}
\xrightarrow{(67)}
1\,\,2\,\,5\,\,\underline{11}\,\,6\,\,\underline{4}\,\,12
\xrightarrow{(46)}
1\,\,2\,\,5\,\,4\,\,6\,\,11\,\,12
\xrightarrow{\text{delete }11,12}
1\,\,2\,\,5\,\,4\,\,6
=
\tau_{(7)}.
\]
\begin{lemma}\label{lemma-eta_e-induced-sequence}
Let \(\tau\) be a permutation of a finite set
\(A=\{a_1<a_2<\cdots<a_m\}\) of positive integers,
and let \(e\) be a positive integer.
	Suppose that
	\[
	a_1<a_2<\cdots<a_k
	\]
	are all the letters of $A$ that are less than or equal to \(e\). Then
	\[
	\tau_{(e)}
	=
	f_{\tau}(a_1,e)f_{\tau}(a_2,e)\ldots f_{\tau}(a_k,e).
	\]
\end{lemma}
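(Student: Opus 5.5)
We argue by induction on the number \(r\) of letters of \(A\) that exceed \(e\). If \(r=0\), then \(k=m\) and no sorting step is performed, so \(\tau_{(e)}=\tau\). For each \(a_i\) we have \(\tau(a_i)\leqslant e\), hence \(f_\tau(a_i,e)=\tau(a_i)=\tau_i\). The claimed word is therefore \(\tau_1\tau_2\ldots\tau_m=\tau\), which settles the base case.

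For the inductive step, let \(r\geqslant1\) and consider the first step of the Straight Selection Sort. It moves \(a_m\), the largest letter and hence one greater than \(e\), into position \(m\) by a transposition. Suppose \(a_m\) sits at position \(i\), so \(\tau(a_i)=a_m\), and let \(\tau(a_m)=y\). In the function viewpoint, the new sequence \(\tilde\tau\) satisfies \(\tilde\tau(a_i)=y\), \(\tilde\tau(a_m)=a_m\), and \(\tilde\tau(a)=\tau(a)\) for all other \(a\). In cycle language, \(a_m\) is removed from its cycle and becomes a fixed point, and \(a_i\) now maps directly to \(\tau(a_m)\). The remaining sort steps act only on positions \(1,\ldots,m-1\). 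They form exactly the selection sort of the word \(\tilde\tau' = \tilde\tau_1\ldots\tilde\tau_{m-1}\), a permutation of \(A\setminus\{a_m\}\), restricted to its letters greater than \(e\). That set has \(r-1\) elements, and \(a_m\) at the end is deleted anyway. Hence \(\tau_{(e)}=\tilde\tau'_{(e)}\). By induction,
\[
\tilde\tau'_{(e)}=f_{\tilde\tau'}(a_1,e)\ldots f_{\tilde\tau'}(a_k,e).
\]

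It remains to show \(f_{\tilde\tau'}(a,e)=f_\tau(a,e)\) for every \(a\leqslant e\); this is the main point to check. The orbit of \(a\) under \(\tilde\tau'\) is the orbit under \(\tau\) with the single letter \(a_m\) excised: whenever \(\tau\) passes \(a_i\to a_m\to y\), the map \(\tilde\tau'\) goes \(a_i\to y\) directly. Since \(a_m>e\), the excised letter is never the first iterate that is \(\leqslant e\). Also \(a\neq a_m\), because \(a\leqslant e<a_m\), so the starting point is unaffected. Thus the first iterate \(\leqslant e\) is the same in both orbits.

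One subtlety needs care: if \(y=a_m\) (so \(a_m\) was already a fixed point, \(i=m\)), the step is trivial, and the transposition description should treat that degenerate case consistently. With that observation the induction closes. I expect no real obstacle beyond stating the orbit-excision argument cleanly.
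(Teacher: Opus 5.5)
Your proof is correct and follows essentially the same route as the paper. Both arguments rest on the observation that one Straight Selection Sort step excises the treated letter from its cycle: $u\mapsto h\mapsto v$ becomes $u\mapsto v$ with $h$ fixed, so after treating all letters exceeding $e$, each $a\leqslant e$ maps to its first iterate $\leqslant e$. Your induction on the number of letters exceeding $e$, passing at each step to the word on $A\setminus\{a_m\}$, simply makes explicit the paper's step of suppressing the fixed points and reading off the induced bijection on $\{a_1,\ldots,a_k\}$.
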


\begin{proof}
	We view \(\tau\) as a bijection on \(A\).
	If \(k=0\), then both sides are empty. If \(k=m\), then no letter is greater than \(e\), so \(\tau_{(e)}=\tau\), and \(f_{\tau}(a_i,e)=\tau(a_i)\) for all \(i\). Thus the result is immediate in these two boundary cases.
	
	We first describe the effect of one step of the Straight Selection Sort algorithm
	on the cycle structure.
	During Straight Selection Sort,
	consider the step in which the letter \(h\) is treated.
	At this moment,
	all letters greater than \(h\) have already been placed in their proper positions.
	Write the current state in two-line notation as
	\[
	\left(
	\begin{matrix}
		a_1 & a_2 & \cdots & a_m\\
		\eta_1 & \eta_2 & \cdots & \eta_m
	\end{matrix}
	\right).
	\]	
	If the entry below \(h\) is \(h\),
	then this step changes nothing.
	In terms of cycles,
	the fixed point
	\[
	h\mapsto h
	\]
	remains unchanged.
	Otherwise,
	suppose that the letter \(h\) appears below \(u\),
	and let \(v\) be the entry below \(h\).
	Since all letters greater than \(h\) have already been placed in their proper positions,
	we must have \(u<h\).
	Thus this step has the following local form:
	\[
	\left(
	\begin{matrix}
		\ast & u & \ast & h & \ast \\
		\ast & h & \ast & v & \ast
	\end{matrix}
	\right)
	\longrightarrow
	\left(
	\begin{matrix}
		\ast & u & \ast & h & \ast \\
		\ast & v & \ast & h & \ast
	\end{matrix}
	\right).
	\]
    Equivalently, in the cycle structure,
    the segment
    \[
    u\mapsto h\mapsto v
    \]
    is replaced by
    \[
    u\mapsto v,
    \qquad
    h\mapsto h.
    \]
    That is,
    \(h\) is removed from its original cycle
    and becomes a fixed point.
	Thus, in both cases,
	after the step in which \(h\) is treated,
	the letter \(h\) becomes, or remains, a fixed point.
	
    Applying this observation successively to all letters greater than \(e\),
    we see that each such letter is removed from its original cycle and becomes a fixed point.
    After these fixed points are suppressed,
    the induced bijection on the letters less than or equal to \(e\) is obtained from the cycle decomposition of \(\tau\) by deleting all letters greater than \(e\).
	
	Hence, for each \(a_i\) with \(1\leqslant i\leqslant k\),
	the image of \(a_i\) in this induced bijection is the first positive iterate of \(a_i\) under \(\tau\) that is less than or equal to \(e\).
	By definition, this image is \(f_{\tau}(a_i,e)\).
	Thus,
	viewed as a permutation of \(\{a_1,a_2,\ldots,a_k\}\),
	\(\tau_{(e)}\) has the following two-line notation:
	\[
	\tau_{(e)}
	=
	\left(
	\begin{matrix}
		a_1 & a_2 & \cdots & a_k\\
		f_{\tau}(a_1,e) & f_{\tau}(a_2,e) & \cdots & f_{\tau}(a_k,e)
	\end{matrix}
	\right).
	\]
	Therefore,
	in one-line notation,
	\[
	\tau_{(e)}
	=
	f_{\tau}(a_1,e)f_{\tau}(a_2,e)\ldots f_{\tau}(a_k,e).
	\]
	This proves the lemma.
\end{proof}
We now focus on Case 2 and recall the relevant notation:
\begin{itemize}[leftmargin=2.8em]
	\item \(e\) is the excedance letter extracted in Step 1.
	
    \item \(x\) is the number of non-excedance letters of \(\sigma\) lying weakly to the right of position \(e\) in \(\sigma\) and less than \(e\).
	
	\item 
	\(
	\tau=\NEXCL(\sigma).
	\)
	
    \item \(b_1<b_2<\cdots<b_x\) are the largest \(x\) letters of \(\tau\) that are less than \(e\), and \(f_i=f_{\tau}(b_i,e)\) for \(1\leqslant i\leqslant x\).
\end{itemize}

\begin{lemma}\label{lemma-sor-increase-case2}
	In Case 2,
	let \(\widetilde{\tau}\) be the subsequence of
	\(\phi_{n}^{\emph{\text{sden}}}(\sigma,c)\) consisting of the non-excedance letters.
	Then
	\[
	\emph{\textsf{sor}}(\widetilde{\tau})
	=
	\emph{\textsf{sor}}(\tau)+x.
	\]
\end{lemma}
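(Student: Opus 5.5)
We will compute $\textsf{sor}(\widetilde{\tau})$ by splitting the Straight Selection Sort of $\widetilde{\tau}$ into two phases. In the first phase we treat the letters greater than $e$; in the second we treat the letters at most $e$.

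We begin by identifying $\widetilde{\tau}$ explicitly. By Step 1, the non-excedance letters of $\sigma$ remain non-excedance letters and keep their positions in $\sigma'$. Step 2 then inserts $e$ into the non-excedance subsequence. The shift of the non-excedance letters weakly to the right of position $e$ places $e$ in the slot formerly occupied by the first such letter, say at index $p$ of $\tau$. All later letters of $\tau$ move one slot right, into a new final slot. Step 3 only permutes the letters $f_1,\dots,f_x$, all of which are smaller than $e$ and non-excedance letters. So $\widetilde{\tau}$ is a word on $\mathrm{set}(\tau)\cup\{e\}$. Since $e$ is at most the position of that slot in the new permutation, and the letters are still non-excedances, the first claim to check is that each letter lands in a non-excedance position. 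Here we must also verify $x\geqslant 1$: the letters of $\tau$ at or after index $p$ number $m-p+1$, and since $e$ becomes a non-excedance only at a position $\geqslant e$, a counting argument forces at least one of these letters to be $<e$.

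\textbf{Phase 1 (letters $>e$).} The letters of $\widetilde\tau$ greater than $e$ are exactly those of $\tau$ greater than $e$, and they occupy the same relative cyclic structure. Inserting $e$ and shifting changes the positions only of the letters after slot $p$. We will argue that the contributions of the large letters are unchanged, so this phase costs the same in $\widetilde{\tau}$ as in $\tau$. To do so, we view sorting as the cycle operation of Lemma \ref{lemma-eta_e-induced-sequence}: each large letter $h$ contributes $h$'s proper index minus the index of its current position. Neither quantity changes, because relabeling the positions by ranks is monotone and the large letters' ranks are unaffected by inserting $e<h$. The one exception is that $h$'s preimage $u$ shifts if $u$ lies after slot $p$. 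We expect to handle this by noting that the Step 3 cyclic replacement of $f_1,\dots,f_x$ is precisely designed to compensate. After Phase 1 we therefore obtain, by Lemma \ref{lemma-eta_e-induced-sequence}, $\tau_{(e)}=f_\tau(a_1,e)\cdots f_\tau(a_k,e)$ for $\tau$. For $\widetilde\tau$ we obtain the corresponding sequence with $e$ inserted and the $f_i$ cyclically rotated.

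\textbf{Phase 2 (letters $\le e$).} For $\tau$, the letters $b_1<\dots<b_x$ are the top $x$ letters below $e$, and $\tau_{(e)}$ has $f_i$ in the slot of $b_i$. For $\widetilde\tau$, after Phase 1 the letter $e$ is the largest and sits in the slot of $b_1$ (at rank-distance $x$ from the end). The letters $f_2,\dots,f_x,f_1$ occupy the slots of $b_1,\dots,b_x$ shifted by one, that is, $f_{i}$ sits in the slot for $b_{i-1}$ and $f_1$ in the slot for $b_x$. Sorting $e$ first costs $x$, since it moves from the slot of $b_1$ to the last slot, swapping with $f_1$. After this swap, $f_1$ lands where $b_1$ was, and the rotated pattern becomes exactly $\tau_{(e)}$ with the slot of $b_i$ containing $f_i$. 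The remaining cost therefore equals the cost of sorting $\tau_{(e)}$. Summing the two phases gives $\textsf{sor}(\widetilde\tau)=\textsf{sor}(\tau)+x$.

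The main obstacle is the index bookkeeping in Phase 1. We must show that the insertion of $e$ and the shift of the trailing letters do not alter the large letters' contributions, and that after Phase 1 the arrangement is exactly the one described above, with $e$ in the slot of $b_1$. We would first try a direct check that the first $x$ non-excedance slots at or after $p$ correspond, after Phase 1, to the slots of $b_1,\dots,b_x$.
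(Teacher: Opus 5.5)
Your overall architecture matches the paper's: sort the letters above $e$ first, show the result is $P\,e\,f_2\cdots f_xf_1$, pay $x$ to move $e$, and land on $\tau_{(e)}$. Your Phase 2 is exactly the paper's closing computation. But there is a genuine gap: you never prove the structural fact on which Phase 1 and the set-up of Phase 2 both rest, namely that \emph{every letter of $\tau$ greater than $e$ lies in the final block}, i.e.\ weakly to the right of position $e$ in $\sigma$. The argument is one line: a non-excedance letter $h>e$ of $\sigma$ sits at a position $\geqslant h>e$.

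Once you have this fact, the rest follows. The suffix of $\tau$ from index $p$ on consists of exactly the $x$ letters below $e$ and all $y$ letters above $e$. The large letters are the $y$ largest letters of $\tau$, so their target indices also lie in this suffix. Consequently:
\begin{itemize}
\item inserting $e$ at index $p$ translates all of Phase 1 one index to the right, so the cost is the same and no swap touches index $p$ or the prefix;
\item exactly $x$ letters follow $e$ after Phase 1;
\item comparing with Lemma~\ref{lemma-eta_e-induced-sequence}, $\{f_1,\ldots,f_x\}$ is precisely the set of small letters of the final block.
\end{itemize}
That last identification is also what your ``first claim to check'' needs. Only because the $f_i$ occupy positions $\geqslant e$ does the Step~3 replacement leave them non-excedances, so that $\widetilde{\tau}$ really is $\tau$ with $e$ inserted at index $p$ and then cyclically relabelled. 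You flag both points (``$e$ sits in the slot of $b_1$'' and the non-excedance check) as still open, and the counting argument you sketch does not supply them.

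Separately, the mechanism you propose for Phase 1 is wrong. Whether you count indices from the left or from the right, a large letter's current index and target index shift together under the insertion of $e$ exactly when that letter lies at or after index $p$. The Step~3 replacement cannot compensate for anything in Phase 1. It only permutes letters smaller than $e$, and the trajectory of the large letters under Straight Selection Sort depends only on where the large letters sit, not on which small letters fill the other slots. There is no exceptional case to repair: by the observation above every large letter lies at or after index $p$, and that, not Step~3, is why Phase 1 costs the same. The cyclic replacement matters only in Phase 2, where it makes the swap of $e$ with $f_1$ restore $\tau_{(e)}$.
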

\begin{proof} 
	In \(\sigma\), let
	\[
	\{g_1,g_2,\ldots,g_x\}
	\quad\text{and}\quad
	\{h_1,h_2,\ldots,h_y\}
	\]
	be the sets of non-excedance letters lying weakly to the right of position
	\(e\), with
	\[
	g_i<e,
	\qquad
	h_j>e.
	\]
	Thus \(g_1,\ldots,g_x\) are exactly the non-excedance letters counted by
	\(x\).  
	We first record two observations.
	
	First, the last \(x+y\) letters of \(\tau\) are precisely \(g_1,\ldots,g_x,h_1,\ldots,h_y\) as a set.
	We call this suffix the \emph{final block}.
	
	Second, the letters of \(\tau\) that are greater than \(e\) are precisely \(h_1,\ldots,h_y\). Indeed, if \(h>e\) is a letter of \(\tau\), 
	then \(h\) is a non-excedance letter of \(\sigma\).
	Thus its position in \(\sigma\) is at least \(h\),
	and hence is greater than \(e\).
	Therefore \(h\) lies to the right of position \(e\) in \(\sigma\),
	so \(h\in\{h_1,\ldots,h_y\}\).

	It follows that \(h_1,\ldots,h_y\) are the largest \(y\) letters of \(\tau\),
	and hence their proper positions in \(\tau\) are the last \(y\) positions.
	Since these letters lie in the final block and their proper positions are also contained in this block,
	the sorting steps involving them affect only this block.
	After these letters have been sorted and then deleted,
	the last \(x\) letters of \(\tau_{(e)}\) are precisely \(g_1,\ldots,g_x\), in some order.
	
	We now determine this order.
	Since \(e\) is an excedance letter of \(\sigma\),
	whereas \(\tau\) consists of the non-excedance letters of \(\sigma\),
	we have \(e\notin\tau\).
	Let
	\[
	a_1<a_2<\cdots<a_k
	\]
	be all the letters of \(\tau\) that are less than \(e\).
	Then these are also all the letters of \(\tau\) that are less than or equal
	to \(e\).
	By Lemma \ref{lemma-eta_e-induced-sequence},
	\[
	\tau_{(e)}
	=
	f_{\tau}(a_1,e)f_{\tau}(a_2,e)\ldots f_{\tau}(a_k,e).
	\]
	Since \(b_1<b_2<\cdots<b_x\) are the largest \(x\) letters among
	\(a_1,\ldots,a_k\),
	we have
	\[
	b_1=a_{k-x+1},
	\quad
	b_2=a_{k-x+2},
	\quad
	\ldots,
	\quad
	b_x=a_k.
	\]
	Hence \(\tau_{(e)}\) ends with
	\[
	f_{\tau}(b_1,e)f_{\tau}(b_2,e)\ldots f_{\tau}(b_x,e)
	=
	f_1f_2\ldots f_x.
	\]
	Write
	\[
	\tau_{(e)}=P\,f_1f_2\ldots f_x
	\]
	for some possibly empty sequence \(P\).
	
    We now record how Steps 2 and 3 affect the non-excedance letters.
    After Step 2, each shifted non-excedance letter occupies a position strictly to the right of its original position, and hence remains a non-excedance letter. 
    Moreover, the inserted letter \(e\) occupies the position vacated by the first shifted non-excedance letter. 
    This position lies weakly to the right of position \(e\). Hence the inserted letter \(e\) is also a non-excedance letter. 
    It remains to check that Step 3 does not change this property. Since the
    last \(x\) letters of \(\tau_{(e)}\) are precisely \(g_1,\ldots,g_x\) as
    a set, we have
    \[
    \{f_1,\ldots,f_x\}=\{g_1,\ldots,g_x\}.
    \]
    The cyclic replacement
    \[
    f_1\mapsto f_2\mapsto \cdots \mapsto f_x\mapsto f_1
    \]
    therefore only permutes the non-excedance letters \(g_1,\ldots,g_x\). These letters all occupy positions weakly to the right of position \(e\) in the full permutation, and each of them is less than \(e\). 
    After the cyclic replacement, each affected letter still occupies a position weakly to the right of position \(e\). Hence each affected letter is less than its new position, and therefore remains a non-excedance letter.

    Consequently, \(\widetilde{\tau}\) is obtained from \(\tau\) by first inserting \(e\) immediately before the final block, namely the last \(x+y\) letters of \(\tau\), and then applying the cyclic replacement to the letters \(f_1,f_2,\ldots,f_x\).

	We compare the Straight Selection Sort procedures for \(\tau\) and
    \(\widetilde{\tau}\).
	The sorting steps involving the letters greater than \(e\)
	are precisely the steps involving \(h_1,\ldots,h_y\).
	These steps give the same total contribution in the computations of
	\(\textsf{sor}(\tau)\) and \(\textsf{sor}(\widetilde{\tau})\).
	Indeed, in the construction of \(\widetilde{\tau}\),
	the inserted letter \(e\) lies to the left of \(h_1,\ldots,h_y\).
	Thus, for each \(h_j\), \(1\leqslant j\leqslant y\),
	both its current position and its proper position are shifted one place to the right.
	Moreover, the cyclic replacement affects only the letters \(g_1,\ldots,g_x\).
	Hence the sorting steps involving \(h_1,\ldots,h_y\)
	have the same distance contributions in the two computations.
	
    Recall that \(\widetilde{\tau}\) is obtained from \(\tau\) by inserting \(e\) immediately before the final block and then applying the cyclic replacement to the letters \(f_1,f_2,\ldots,f_x\). 
    Since
    \[
    \tau_{(e)}=P\,f_1f_2\cdots f_x,
    \]
    after the letters \(h_1,\ldots,h_y\) have been sorted and deleted, 
    we have
    \[
    \widetilde{\tau}_{(e)}
    =
    P\,e\,f_2f_3\cdots f_xf_1.
    \]
  
	The next sorting step for \(\widetilde{\tau}\) moves \(e\) to its proper
	position.
	Since there are exactly \(x\) letters to the right of \(e\) in
	\(\widetilde{\tau}_{(e)}\),
	this step contributes \(x\).
	After this step,
	\[
	P\,e\,f_2f_3\cdots f_xf_1
	\]
	becomes
	\[
	P\,f_1f_2\cdots f_x\,e.
	\]
	Apart from the fixed letter \(e\),
	the remaining sequence is exactly
	\[
	P\,f_1f_2\cdots f_x=\tau_{(e)}.
	\]
	Thus all subsequent sorting steps are the same as the corresponding remaining steps for \(\tau\).
	It follows that
	\[
	\textsf{sor}(\widetilde{\tau})
	=
	\textsf{sor}(\tau)+x.
	\] 
	This proves the lemma.
\end{proof}

We are now ready to prove \eqref{eq-(exc,sden)-Exc-sden}, which we state as the following lemma.
\begin{lemma}\label{lemma:Exc-sden}
	For $\sigma\in\mathfrak{S}_{n-1}$ and $0\leqslant c\leqslant n-1$,
	we have
	\begin{align*}
		\emph{\textsf{Exc}}(\phi_{n}^{\emph{\text{sden}}}(\sigma,c))&=
		\left\{
		\begin{aligned}
			&\emph{\textsf{Exc}}(\sigma),
			\quad\quad\quad\quad~\text{if~}0\leqslant c\leqslant \emph{\textsf{exc}}(\sigma),\\
			&\emph{\textsf{Exc}}(\sigma)\cup\{k_{d}\},
			\quad\text{~otherwise,}
		\end{aligned}
		\right. \\
		\emph{\textsf{sden}}(\phi_{n}^{\emph{\text{sden}}}(\sigma,c))&=\emph{\textsf{sden}}(\sigma)+c,
	\end{align*}
	where, in the second case of the first formula, $d=c-\emph{\textsf{exc}}(\sigma)$ and
	\(k_d\) is the \(d\)-th smallest non-excedance position of \(\sigma\).
\end{lemma}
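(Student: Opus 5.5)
The plan is to verify the two formulas separately in each of the three cases of the construction. The three cases correspond to $c=0$, to $1\leqslant c\leqslant s$, and to $s+1\leqslant c\leqslant n-1$, where $s=\textsf{exc}(\sigma)$. In each case we write $\textsf{sden}$ as the sum of three pieces: the excedance sum $\sum_{i\in\textsf{Exc}}i$, the term $\textsf{inv}(\EXCL)$, and the term $\textsf{sor}(\NEXCL)$. We then track how each piece changes.

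\textbf{Case 1} ($c=0$) is immediate. Appending $n$ at position $n$ creates a fixed point, so $\textsf{Exc}$ is unchanged. The subsequence $\EXCL$ is unchanged. The new $\NEXCL$ is the old one followed by $n$, which is already in place, so $\textsf{sor}$ is unchanged.

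\textbf{Case 3} ($c=s+d$).
\begin{itemize}
\item Let $k_d$ be the position of the letter after the space labeled $c$; it is the $d$-th smallest non-excedance position. The letter $n$ is placed there, so $k_d$ becomes an excedance, provided $k_d<n$, which holds.
\item Each shifted non-excedance letter moves to a larger position, so it stays a non-excedance letter. Every old excedance letter keeps its position, so $\textsf{Exc}$ becomes $\textsf{Exc}(\sigma)\cup\{k_d\}$.
\item In $\EXCL$, the letter $n$ is inserted after the excedance letters lying to the left of $k_d$. It therefore creates exactly $e_{>}$ new inversions, where $e_{>}$ is the number of excedance positions greater than $k_d$.
\item The new $\NEXCL$ is the old $\NEXCL$ with its largest letter $n$ deleted and the final empty slot filled, so it is exactly the old $\tau$. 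Hence $\textsf{sor}$ is unchanged.
\item The total increase is $k_d+e_{>}$. Since $k_d=d+\#\{\text{excedance positions}<k_d\}$, we get $k_d+e_{>}=d+s=c$.
\end{itemize}

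\textbf{Case 2} ($1\leqslant c\leqslant s$) is the heart of the proof.
\begin{itemize}
\item Step 1 shifts a run of excedance letters right, each by one excedance position. The stopping rule ensures that every letter except $e$ remains an excedance. The letter $n$ fills the first vacated excedance position. Let $p$ be the position of $e$ in $\sigma$ (I expect $p$ to be the last position in the run).
\item Step 2 places $e$ at a position weakly to the right of position $e$, so $e$ becomes a non-excedance letter. All other non-excedance letters only move right.
\item Step 3 keeps every non-excedance letter a non-excedance letter, by the argument in Lemma~\ref{lemma-sor-increase-case2}.
\item The excedance positions are exactly those of $\sigma$: position $p$ is now occupied by the previous shifted letter, or by $n$. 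So $\textsf{Exc}$ is unchanged.
\end{itemize}

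By Lemma~\ref{lemma-sor-increase-case2}, $\textsf{sor}$ increases by $x$. We must also justify $x\geqslant1$. Because $e$ would become a non-excedance letter after the shift (or is the last excedance letter), the positions $e,\dots,n-1$ contain non-excedance letters, and there are more such positions than letters of $\tau$ exceeding $e$. Hence some letter of $\tau$ to the right of position $e$ is smaller than $e$. This counting needs care.

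It remains to show that $\textsf{inv}(\EXCL)$ increases by $c-x$. Passing to the new $\EXCL$ amounts to three changes:
\begin{itemize}
\item delete $e$;
\item shift the letters between the $c$-labeled letter and $e$ right by one slot in the $\EXCL$ word;
\item insert $n$ at the slot of the $c$-labeled letter, which is the $(s-c+1)$-th excedance letter.
\end{itemize}
The letter $n$ contributes $c-1$ inversions, one with each later excedance letter other than $e$. Deleting $e$ loses the inversions of $e$ with excedance letters: those before it that are larger, and those after it that are smaller. Since the relative order of the other letters is preserved, the net change is $c-1-\textsf{inv}_{\EXCL}(e)$.

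We are therefore reduced to the identity $\textsf{inv}_{\EXCL}(e)=x-1$. This is the step I expect to be the main obstacle. The approach is to count, for the letter $e$ at excedance position $p$, the positions $\leqslant e$ and their occupants. The letters smaller than $e$ are split among three groups: excedance letters before $p$, excedance letters after $p$, and non-excedance letters. Combining this with the stopping condition (the next excedance position $p'$ satisfies $e\leqslant p'$, or $e$ is last), a pigeonhole count should match the smaller excedance letters to the right of $e$ and the larger ones to the left against the $x$ non-excedance letters $<e$ to the right of position $e$. I would first test this identity on the running example, with $e=7$ and $x=3$, before writing the general counting argument.
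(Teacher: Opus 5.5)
\textbf{Verdict: genuine gap in Case 2.} Your decomposition matches the paper's: the same three cases, the same split of $\textsf{sden}$ into three summands, and Lemma~\ref{lemma-sor-increase-case2} for the sorting-index part. Cases 1 and 3 are fine. Your reduction of the $\EXCL$ part to a net change of $c-1-\textsf{inv}_{\EXCL}(e)$ is also correct, since $n$ has exactly $c-1$ excedance letters after it.

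The problem is the identity $\textsf{inv}_{\EXCL}(e)=x-1$. This is the whole remaining content of Case 2, and you only say a pigeonhole count ``should'' give it. So the proof stops exactly at its crux.

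Your argument for $x\geqslant 1$ also fails as written. Comparing the positions $e,\dots,n-1$ only with the letters of $\tau$ exceeding $e$ ignores excedance letters sitting at positions $\geqslant e$. If instead you count non-excedance positions, the claim is just a restatement of $x\geqslant1$. Finally, your plan to match ``the smaller excedance letters to the right of $e$'' suggests you have not yet used the decisive consequence of the stopping rule: that set is empty.

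\textbf{How to close the gap.} Let $p$ be the position of $e$ in $\sigma$. The stopping rule says exactly that the next excedance position $p'$ after $p$, if it exists, satisfies $p'\geqslant e$. Two consequences follow.
\begin{itemize}
\item[(i)] Every excedance letter to the right of $e$ occupies a position $\geqslant e$, so it exceeds $e$. Hence $\textsf{inv}_{\EXCL}(e)$ is simply the number of excedance letters to the left of $p$ that are larger than $e$.
\item[(ii)] No excedance position lies strictly between $p$ and $e$.
\end{itemize}

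Now use the balance count $|\{i:\sigma_i<e\leqslant i\}|=|\{i:\sigma_i\geqslant e>i\}|$, which the paper cites as Dumont's identity. Both sides equal $n-e-|\{i:\ i\geqslant e,\ \sigma_i\geqslant e\}|$.
\begin{itemize}
\item The left side is exactly $x$.
\item Each index on the right is an excedance position smaller than $e$ carrying a letter $\geqslant e$. By (ii), these are $p$ itself together with the positions left of $p$ carrying letters larger than $e$.
\end{itemize}
Hence $x=1+\textsf{inv}_{\EXCL}(e)$. This gives the identity you need and also $x\geqslant 1$.

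Your idea of comparing positions up to $e$ with the letters they hold is this same count, so the plan is viable. But it has to be carried out, and it depends on the stopping rule in the form $p'\geqslant e$.
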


\begin{proof}
	We keep the notation used in the construction of
	$\phi_{n}^{\text{sden}}$.
	Let
	\[
	w=\phi_{n}^{\text{sden}}(\sigma,c)
	\]
	and put $s=\textsf{exc}(\sigma)$.
	Then Cases 1--3 correspond respectively to
	$c=0$,
	$1\leqslant c\leqslant s$,
	and $s+1\leqslant c\leqslant n-1$.
	
	\medskip
	\noindent\textbf{Case 1:} $c=0$.
	In this case,
	\[
	w=\sigma_1\sigma_2\ldots\sigma_{n-1}n.
	\]
	Since the letter $n$ is inserted at the last position,
	it is not an excedance.
    Moreover, 
    \(\EXCL(w)=\EXCL(\sigma)\),
    and \(\NEXCL(w)\) is obtained from \(\NEXCL(\sigma)\) by appending \(n\).
    Since \(n\) is the largest letter and is already in its proper position, the sorting index of the non-excedance-letter subsequence is unchanged.
	Hence
	\[
	\textsf{Exc}(w)=\textsf{Exc}(\sigma)
	\]
	and
	\[
	\textsf{sden}(w)=\textsf{sden}(\sigma)=\textsf{sden}(\sigma)+c.
	\]
	Thus the result holds in this case.
	
	\medskip
	\noindent\textbf{Case 2:} $1\leqslant c\leqslant s$.
	Recall that $e$ is the excedance letter extracted in Step 1,
    and that \(x\) is the number of non-excedance letters of \(\sigma\) that lie weakly to the right of position \(e\) in \(\sigma\) and are less than \(e\).
	Let $\sigma'$ be the sequence obtained after Step 1.
	We first claim that
	\begin{equation}\label{eq-Excl-increase-sden}
		\textsf{inv}(\EXCL(\sigma'))
		=
		\textsf{inv}(\EXCL(\sigma))+c-x.
	\end{equation}
	   
    In the excedance-letter subsequence,
    Step 1 first removes \(e\), and then inserts \(n\) in the position formerly occupied by the excedance letter immediately following the space labeled with \(c\).
    Thus the newly inserted letter \(n\) has exactly \(c-1\) excedance letters to its right,
    and therefore creates exactly \(c-1\) inversions in the excedance-letter subsequence.    
    
    On the other hand,
    before the removal of \(e\), 
    the inversions involving \(e\) in the excedance-letter subsequence are exactly those formed by \(e\) together with the excedance letters lying to its left and greater than \(e\).
    Indeed, by the stopping rule,
    there is no excedance letter occurring strictly between the position occupied by the letter \(e\) and the position \(e\).
    Hence every excedance letter to the right of the letter \(e\) lies in a position at least \(e\),
    and therefore has value greater than \(e\).
    Thus no such letter forms an inversion with \(e\).

    We now show that the number of excedance letters to the left of the letter \(e\) that are greater than \(e\) is \(x-1\).
    By Dumont's identity \cite{Dumont-1974}, we have
    \begin{align}\label{eq-Dumont}
    |\{\sigma_i:\sigma_i<e\leqslant i\}|
    =
    |\{\sigma_i:\sigma_i\geqslant e>i\}|.
    \end{align}
    Indeed, after adding the common term
    \(|\{\sigma_i:\sigma_i\geqslant e,\ e\leqslant i\}|\), the two sides become
    \(|\{\sigma_i:e\leqslant i\}|\) and
    \(|\{\sigma_i:\sigma_i\geqslant e\}|\), respectively.
    These two sets have the same cardinality, since \(\sigma\) is a permutation.
    The left-hand side of (\ref{eq-Dumont}) is precisely \(x\).
    Indeed, if \(\sigma_i<e\leqslant i\), then \(\sigma_i<i\), so \(\sigma_i\) is a non-excedance letter; moreover, it lies weakly to the right of position \(e\) and is less than \(e\).
    This is exactly the set counted by \(x\).
    By (\ref{eq-Dumont}), the right-hand side also has cardinality \(x\).
    For the right-hand side,
    if \(\sigma_i\geqslant e>i\),
    then \(\sigma_i\) is an excedance letter.
    Thus the right-hand side counts the excedance letters lying to the left of position \(e\) whose values are at least \(e\).
    Among these letters, the letter \(e\) is the rightmost one,
    because, by the stopping rule, no excedance letter lies strictly between the position occupied by the letter \(e\) and the position \(e\).
    Hence the number of excedance letters to the left of the letter \(e\) that are greater than \(e\) is \(x-1\), as desired.
    (Note that \(x\geqslant1\), since the letter \(e\) itself is counted on the right-hand side.)
    
    Therefore the removal of \(e\) deletes exactly \(x-1\) inversions from the excedance-letter subsequence, 
    and the insertion of \(n\) creates exactly \(c-1\) inversions.
    This proves \eqref{eq-Excl-increase-sden}.
	
	Next, Step 1 does not change the excedance set.
	In Step 2,
	the letter \(e\) is inserted as a non-excedance letter,
	and the shifted non-excedance letters remain non-excedance letters.
	Step 3 only cyclically replaces the letters \(f_1,\ldots,f_x\).
	These letters are all less than \(e\), and after the replacement they still occupy positions weakly to the right of position \(e\). 
	Hence they remain non-excedance letters, 
	so Step 3 creates no new excedance.
	Therefore
	\[
	\textsf{Exc}(w)=\textsf{Exc}(\sigma).
	\]
	
	By Lemma \ref{lemma-sor-increase-case2}, we have
	\[
	\textsf{sor}(\NEXCL(w))=\textsf{sor}(\tau)+x.
	\]
	Moreover,
	Steps 2 and 3 do not change the excedance-letter subsequence obtained after Step 1.
	Combining this with \eqref{eq-Excl-increase-sden},
	we get
	\begin{align*}
		\textsf{sden}(w)
		&=
		\sum_{i\in\textsf{Exc}(w)} i
		+\textsf{inv}(\EXCL(w))
		+\textsf{sor}(\NEXCL(w))\\
		&=
		\sum_{i\in\textsf{Exc}(\sigma)} i
		+\left(\textsf{inv}(\EXCL(\sigma))+c-x\right)
		+\left(\textsf{sor}(\tau)+x\right)\\
		&=
		\textsf{sden}(\sigma)+c.
	\end{align*}
	Thus the result holds in Case 2.
	
	\medskip
	\noindent\textbf{Case 3:} $s+1\leqslant c\leqslant n-1$.
	Let \(d=c-s\), 
	and let \(k_d\) be the \(d\)-th smallest non-excedance position of \(\sigma\).
	Suppose that there are \(u\) excedance positions of \(\sigma\) to the left of position \(k_d\) and \(v\) excedance positions of \(\sigma\) to the right of position \(k_d\).
	Then
	\[u+v=s.\]
	Since \(k_d\) is the \(d\)-th non-excedance position,
	among the first \(k_d\) positions there are \(u\) excedance positions and \(d\) non-excedance positions.
	Hence
	\[k_d=u+d.\]
	
    In Case 3, by construction,
    the letter \(n\) is inserted at position \(k_d\).
    Since \(k_d\leq n-1\), 
    this creates a new excedance at \(k_d\).	
    All original excedances remain excedances,
	and all shifted non-excedance letters remain non-excedance letters.
	Therefore
	\[\textsf{Exc}(w)=\textsf{Exc}(\sigma)\cup\{k_d\}.\]

    The new excedance letter \(n\) has exactly \(v\) excedance letters to its right.
    Since \(n\) is larger than all other letters,
    it creates exactly \(v\) new inversions in the excedance-letter subsequence.
    Thus
    \[\textsf{inv}(\EXCL(w))=\textsf{inv}(\EXCL(\sigma))+v.\]
    On the other hand,
    the non-excedance-letter subsequence is unchanged.
    Hence
    \[
    \textsf{sor}(\NEXCL(w))
    =
    \textsf{sor}(\NEXCL(\sigma)).
    \] 
    Therefore
    \begin{align*}
	\textsf{sden}(w)
	&=
	\textsf{sden}(\sigma)+k_d+v\\
	&=
	\textsf{sden}(\sigma)+u+d+v\\
	&=
	\textsf{sden}(\sigma)+s+d\\
	&=
	\textsf{sden}(\sigma)+c.
    \end{align*}
Thus the result also holds in Case 3.
\end{proof}

\begin{lemma}\label{lemma:bijection-sden}
	The map
	\[
	\phi_{n}^{\emph{\text{sden}}}:
	\mathfrak{S}_{n-1}\times\{0,1,\ldots,n-1\}\rightarrow\mathfrak{S}_{n}
	\]
	is a bijection.
\end{lemma}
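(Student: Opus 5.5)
Since both sets have cardinality $n\cdot(n-1)!=n!$, I will prove that $\phi_n^{\text{sden}}$ is injective by constructing an explicit inverse. Equivalently, for each $w\in\mathfrak{S}_n$ I will recover the case, $c$ and $\sigma$ from $w$ alone. The first task is to read off the case from the position of the letter $n$ in $w$.
\begin{itemize}
\item In Case 1, $n$ is in the last position $n$, so it is a fixed point.
\item In Case 3, $n$ sits at a non-excedance position $k_d$ of $\sigma$ with $k_d\leqslant n-1$, so it is an excedance letter of $w$.
\item In Case 2, $n$ also becomes an excedance letter, at an excedance position of $\sigma$.
\end{itemize}
Cases 2 and 3 must therefore be separated by $\textsf{Exc}$. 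Lemma \ref{lemma:Exc-sden} gives $\textsf{Exc}(w)=\textsf{Exc}(\sigma)$ in Case 2 and $\textsf{Exc}(w)=\textsf{Exc}(\sigma)\cup\{k_d\}$ in Case 3. I expect the right criterion to be the following. Let $p$ be the position of $n$ in $w$, and delete $n$ by shifting letters back. In Case 3, the non-excedance letters to the right of $p$ shift left one non-excedance position. This inverse should be valid exactly when the resulting sequence still has $p$ as a non-excedance position, i.e.\ when the letter moved into position $p$ is at most $p$, or when $p$ becomes the final empty slot. Otherwise we are in Case 2. I would verify that these conditions are complementary by checking them against the forward constructions.

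For Case 3 the inverse is immediate. Remove $n$, shift the subsequent non-excedance letters one non-excedance position left, and delete the empty final position. The label $c=s+d$ is recovered from the rank $d$ of $p$ among the non-excedance positions of the resulting $\sigma$.

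The main obstacle is inverting Case 2, which I would do by undoing Steps 3, 2, 1 in order. In $w$ the letter $e$ is a non-excedance letter, and it should be recoverable. It should be the unique non-excedance letter $e$ of $w$ such that every non-excedance letter after it and smaller than $e$ ... more concretely, I would use the proof of Lemma \ref{lemma-sor-increase-case2}. There $\NEXCL(w)$ sorts, after the letters greater than $e$ are handled, to $P\,e\,f_2\cdots f_xf_1$. So $e$ is the unique letter that, when Straight Selection Sort reaches it, has exactly $x$ letters to its right, where $x$ equals the number of excedance letters of $w$ at positions $<e$ with value $\geqslant e$, computed after removing $n$ (the Dumont count from Lemma \ref{lemma:Exc-sden}). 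Candidates for $e$ could instead be identified as the non-excedance letters whose value $e$ is at most the position of the first non-excedance slot they occupy. Should this characterization prove ambiguous, my first fallback would be a different criterion. Step 1 forces $e$ to be the smallest value such that the excedance block shifted right stops there, so I would try to characterize $e$ by the stopping rule applied to the excedance letters of $w$ to the right of $n$.

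Once $e$ is known, $x$ is determined by the Dumont identity \eqref{eq-Dumont}. The sets $\{b_i\}$ and $\{f_i\}=\{g_i\}$ are also determined, because the non-excedance letters smaller than $e$ are unchanged as a set. Undoing the cyclic replacement requires computing $f_\tau$, where $\tau=\NEXCL(\sigma)$ is itself unknown. To avoid this circularity I would use Lemma \ref{lemma-eta_e-induced-sequence} instead. The letters greater than $e$ are untouched, so the map $\tau\mapsto\tau_{(e)}$ restricted to the final block can be inverted by rotating $f_2\cdots f_xf_1$ back to $f_1\cdots f_x$ inside the sorted picture and then unsorting. Then I undo Step 2 by removing $e$ and shifting the non-excedance letters left, and undo Step 1 by removing $n$, shifting the excedance letters left, and reinserting $e$ at the stop. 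Finally, $c$ is read off as the label of the space before $n$'s original slot. Checking that each forward step is matched by its reverse completes the proof of injectivity, hence bijectivity.
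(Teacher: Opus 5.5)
There is a genuine gap. Both choices your inverse has to make rest on criteria that fail: deciding between Case 2 and Case 3, and finding $e$ in Case 2.

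\textbf{The Case 2 versus Case 3 test.} Your Case 3 test only checks that the letter moved into position $p$ is at most $p$. Take $\sigma=3142$ and $c=2$. This is Case 2, since the space before $3$ has label $2$. Here $e=3$, $x=1$, and $w=\phi_5^{\mathrm{sden}}(3142,2)=51432$. Deleting $5$ and shifting the non-excedance letters $1,3,2$ left puts $1$ into position $1$. So your test declares Case 3 and outputs $1342$. But the shifted letter $3$ now sits at position $2$ as an excedance, and indeed $\phi_5^{\mathrm{sden}}(1342,3)=53412\neq w$. A correct test must require that \emph{every} non-excedance letter moved left stays a non-excedance letter, i.e.\ each is at most the preceding non-excedance position, with $p$ counted as one. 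That refined test is equivalent to the paper's condition $a\leqslant z$. Showing that Case 2 always violates it needs exactly the structural fact you are missing.

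\textbf{Locating $e$.} The paper's key idea is to call a non-excedance letter $w_i$ \emph{critical} if $[w_i,i)\subseteq\textsf{Exc}(w)$. In Case 2, $e$ lands in the first non-excedance slot $\ell_p\geqslant e$, so it is critical. Every later non-excedance letter satisfies $w_{\ell_i}\leqslant\ell_{i-1}$ and so is not critical: shifted letters came from $\ell_{i-1}$, and replaced letters are $<e\leqslant\ell_{i-1}$. Hence $e$ is the rightmost critical letter $a$, and $z<a$ versus $a\leqslant z<n$ separates the cases.

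Your Straight Selection Sort characterization of $e$ cannot replace this. By the argument of Lemma \ref{lemma-sor-increase-case2} together with \eqref{eq-Dumont}, \emph{every} critical letter $h$ has, in $\widetilde\tau_{(h)}$, exactly as many letters to its right as $w$ has letters at positions $<h$ with values $\geqslant h$. So your test accepts every critical letter:
\begin{itemize}
\item $w=\phi_7^{\mathrm{sden}}(342165,1)=3421765$ has $e=6$, yet $h=2$ also passes with count $1$.
\item The letter $h=1$ always passes, with count $0$.
\item If $n$ is excluded from the count, as you wrote, the true $e$ yields $x-1$ and fails.
\end{itemize}
Your description of ``candidates'' gestures at the critical condition. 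But without the fact that $e$ is the \emph{rightmost} such letter, nothing singles it out, and your fallbacks are not developed far enough to determine $e$.

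Once $e$ is known, the rest of your plan agrees with the paper: read the rotated tail $f_2\cdots f_xf_1$ off $\widetilde\tau_{(e)}$, undo Steps 2 and 1, and read off $c$. Note that the cyclic replacement is a relabelling of values, so it is undone directly in $w$ with no ``unsorting''.
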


\begin{proof}
	For positive integers \(i\leqslant j\), write
	\[
	[i,j):=\{i,i+1,\ldots,j-1\},
	\]
	with the convention that \([i,i)=\emptyset\).
	Let \(w=w_1w_2\ldots w_n\in\mathfrak{S}_n\).
	A non-excedance letter \(w_i\) of \(w\) is called a
	\emph{critical non-excedance letter} if
	\[
	[w_i,i)\subseteq \textsf{Exc}(w).
	\]
	If \(w_i=i\),
	then \(w_i\) is critical,
	since \([w_i,i)=\emptyset\).
	Moreover,
	every permutation has at least one critical non-excedance letter:
	indeed,
	the leftmost non-excedance letter is critical.
	
	We keep the notation used in the construction of \(\phi_n^{\text{sden}}\).
	Let
	\[
	w=w_1w_2\ldots w_n=\phi_n^{\text{sden}}(\sigma,c).
	\]
	Let \(z\) be the position such that \(w_z=n\).
	Let \(a=w_t\) be the rightmost critical non-excedance letter of \(w\).
	Assume that \[\ell_1<\ell_2<\cdots<\ell_m\] are all the non-excedance positions of \(w\).
	
	\medskip
	\noindent\textbf{Distinguishing the three cases.}
	We first show that the three cases in the construction can be distinguished
	from \(w\).
	
	\smallskip
	\noindent\textbf{(D1)}
	In Case 1,
	we have \(z=n\).
	
	\smallskip
	\noindent\textbf{(D2)}
	In Case 2, we have \(z<a\).
	
	Recall that \(e\) is the excedance letter extracted in Step 1.
	The letter \(n\) is inserted into the position occupied by the first shifted excedance letter, 
	which lies strictly to the left of position \(e\).
	Thus \(z<e\).
	
	We next show that \(a=e\).
	Suppose that the letter \(e\) occupies position \(\ell_p\) in \(w\).
	Then in \(w\), \(\ell_p\) is the first non-excedance position weakly to the right of position \(e\).
	Hence
	\[
	[e,\ell_p)\subseteq \textsf{Exc}(w),
	\]
	and so \(e=w_{\ell_p}\) is a critical non-excedance letter of \(w\).
	
	Now consider any non-excedance position \(\ell_i\) of \(w\) strictly to the right of \(\ell_p\).
	Then \(i>p\), and \(\ell_{i-1}\) is the preceding non-excedance position.
	If \(w_{\ell_i}\) is not affected by the cyclic replacement in Step 3, then, just before Step 2, 
	this letter occupied the preceding non-excedance position \(\ell_{i-1}\).
	Thus
	\[
	w_{\ell_i}\leqslant \ell_{i-1}<\ell_i.
	\]
	If \(w_{\ell_i}\) is affected by the cyclic replacement in Step 3,
	then \(w_{\ell_i}\) is one of the letters \(f_1,f_2,\ldots,f_x\),
	all of which are less than \(e\).
	Hence
	\[
	w_{\ell_i}<e\leqslant \ell_p\leqslant \ell_{i-1}<\ell_i.
	\]
	In either case,
	we have
	\[
	w_{\ell_i}\leqslant \ell_{i-1}<\ell_i.
	\]
	Since \(\ell_{i-1}\) is a non-excedance position of \(w\) and
	\[
	\ell_{i-1}\in [w_{\ell_i},\ell_i),
	\]
	the letter \(w_{\ell_i}\) is not critical.
	Therefore the rightmost critical non-excedance letter of \(w\) is \(e\).
	Thus \(a=e\),
	and consequently \(z<e=a\).
	
	\smallskip
	\noindent\textbf{(D3)}
	In Case 3,
	we have \(a\leqslant z<n\).
	
	In this case,
	the letter \(n\) is inserted at the position \(k_d\).
	Note that \(k_d\) is a non-excedance position of \(\sigma\), 
	but it becomes an excedance position of \(w\).
	It is clear that \(z=k_d<n\).
	In what follows, we prove that \(a=w_t\leqslant k_d\).
	
	If \(t<k_d\),
	then, since \(a=w_t\) is a non-excedance letter of \(w\),
	we have
	\[
	a=w_t\leqslant t<k_d.
	\]
	Hence \(a<k_d\).
	
	It remains to consider the case \(t>k_d\).
	Let \(\ell_p\) be the first non-excedance position of \(w\) lying to the right of position \(k_d\).
	The letter \(w_{\ell_p}\) comes from the non-excedance letter originally at position \(k_d\) in \(\sigma\),
	and is therefore at most \(k_d\).
	Thus, if \(a=w_{\ell_p}\),
	then \(a\leqslant k_d\).
	
    For any non-excedance position \(\ell_i\) of \(w\) strictly to the right of \(\ell_p\),
    the preceding non-excedance position is \(\ell_{i-1}\).
    Since \(w_{\ell_i}\) comes from the non-excedance letter occupying
    the position \(\ell_{i-1}\) before the shift in Case 3,	
	we have
	\[
	w_{\ell_i}\leqslant \ell_{i-1}<\ell_i.
	\]
	Thus
	\[
	\ell_{i-1}\in [w_{\ell_i},\ell_i).
	\]
    Since \(\ell_{i-1}\) is a non-excedance position of \(w\),
    the letter \(w_{\ell_i}\) is not critical.
	Therefore the rightmost critical non-excedance letter \(a\) cannot be any of these later non-excedance letters.
	
	Consequently, in all cases we have \(a\leqslant k_d\).
	Since \(z=k_d\), it follows that \(a\leqslant z\).

	\medskip
	\noindent\textbf{Recovering the preimage.}
	We now recover \((\sigma,c)\) uniquely from \(w\).
    Recall that \(w_z=n\),
    and that \(a=w_t\) is the rightmost critical non-excedance letter of \(w\).

	\smallskip
	\noindent\textbf{(R1)}
	If \(z=n\),
	then \(w\) comes from Case 1.
	In this case,
	\(\sigma\) is obtained from \(w\) by deleting the letter \(n\).
	
	\smallskip
	\noindent\textbf{(R2)}
	If \(z<a\),
	then \(w\) comes from Case 2.
	Set \(e:=a\).
	Let \(\widetilde{\tau}\) be the subsequence of \(w\) consisting of the
	non-excedance letters.
	Form the sequence \(\widetilde{\tau}_{(e)}\).
	Write
	\[
	\widetilde{\tau}_{(e)}=P\,e\,q_1q_2\ldots q_r,
	\]
	where \(q_1q_2\ldots q_r\) is the part of
	\(\widetilde{\tau}_{(e)}\) to the right of the letter \(e\).
	
	In Case 2,
	before the cyclic replacement,
	the corresponding final string is
	\[
	f_1f_2\ldots f_x,
	\]
	and after the cyclic replacement it becomes
	\[
	f_2f_3\ldots f_xf_1.
	\]
	Hence \(r=x\),
	and
	\[
	q_1q_2\ldots q_r=f_2f_3\ldots f_xf_1.
	\]
	Apply the inverse cyclic replacement
	\[
	q_1\mapsto q_r,\quad
	q_2\mapsto q_1,\quad
	\ldots,\quad
	q_r\mapsto q_{r-1}
	\]
	simultaneously in \(w\), and let \(u\) be the resulting sequence.
	Then \(u\) is the sequence obtained after Step 2 in the construction.
		
    Next, we reverse Step 2.
    Remove the letter \(e\) from \(u\), leaving its position empty.
    Regard this empty position as a non-excedance position.
    Then shift the non-excedance letters lying to the right of this empty position one non-excedance position to the left.
	Finally,
	delete the empty position created at the far right.
	Let \(v\) be the resulting sequence.
	Then \(v\) is the sequence obtained after Step 1 in the construction.
	
	It remains to reverse Step 1.
	Let
	\[
	p_1<p_2<\cdots<p_k
	\]
	be all positions \(p\in [z,e)\) such that \(v_p>p\).
	Then \(p_1=z\) and \(v_{p_1}=n\).
	Replace the letter at position \(p_i\) by \(v_{p_{i+1}}\) for \(1\leqslant i\leqslant k-1\),
	and replace the letter at position \(p_k\) by \(e\).
	The resulting permutation is \(\sigma\).
	
	\smallskip
	\noindent\textbf{(R3)}
	If \(a\leqslant z<n\),
	then \(w\) comes from Case 3.
	In this case,
	we recover \(\sigma\) as follows.
	Remove the letter \(n\), leaving its position empty. 
	Regard this empty position as a non-excedance position. 
	Then shift the non-excedance letters lying to the right of this empty position one non-excedance position to the left.	
	Finally, delete the empty position created at the far right.
	The resulting permutation is \(\sigma\).
	
	Thus \(\sigma\) is uniquely determined by \(w\).
	Finally,
	set
	\[
	c=\textsf{sden}(w)-\textsf{sden}(\sigma).
	\]
	By Lemma \ref{lemma:Exc-sden},
    this recovers the original value of \(c\).
	Therefore the pair \((\sigma,c)\) is uniquely recovered from \(w\).
	Hence \(\phi_n^{\text{sden}}\) is injective.
	
	Since
	\[
	|\mathfrak{S}_{n-1}\times\{0,1,\ldots,n-1\}|
	=
	|\mathfrak{S}_n|
	=
	n!,
	\]
	the map \(\phi_n^{\text{sden}}\) is a bijection.
\end{proof}

By Lemmas \ref{lemma:Exc-sden} and \ref{lemma:bijection-sden},
we obtain Theorem \ref{Thm-bijection-exc-sden}.

\section*{Acknowledgment}
\addcontentsline{toc}{section}{Acknowledgment} 
This work was supported by the National Natural Science Foundation of China (12101134).

\phantomsection
\begin{spacing}{0.9}
	
\end{spacing}
\end{document}